\newcommand{\Rset}{\mathbb{R}}
\newcommand{\Nset}{\mathbb{N}}
\newcommand{\cB}{\mathcal{B}}
\newcommand{\cD}{\mathcal{D}}
\newcommand{\cL}{\mathcal{L}}
\newtheorem{theorem}{Theorem}[section]}
\newtheorem{proposition}[theorem]{Proposition}}
\newtheorem{problem}[theorem]{Problem}}
\newtheorem{remark}[theorem]{Remark}}
\newtheorem{assumption}[theorem]{Assumption}}
\title{\LARGE \bf
A Method to Guarantee Local Convergence for Sequential Quadratic Programming with Poor Hessian Approximation
}
\author{Tuan T. Nguyen, Mircea Lazar and Hans Butler
\thanks{The authors are with the Department of Electrical Engineering,
	Eindhoven University of Technology, P.O. Box 513, 5600
	MB Eindhoven, The Netherlands.
	E-mails: {\tt\small \{t.t.nguyen,m.lazar,h.butler\}@tue.nl}}%
}
\begin{document}

\maketitle
\thispagestyle{empty}
\pagestyle{empty}

\begin{abstract}
Sequential Quadratic Programming (SQP) is a powerful class of algorithms for solving nonlinear optimization problems. Local convergence of SQP algorithms is guaranteed when the Hessian approximation used in each Quadratic Programming subproblem is close to the true Hessian. However, a good Hessian approximation can be expensive to compute. Low cost Hessian approximations only guarantee local convergence under some assumptions, which are not always satisfied in practice. To address this problem, this paper proposes a simple method to guarantee local convergence for SQP with poor Hessian approximation. The effectiveness of the proposed algorithm is demonstrated in a numerical example.
\end{abstract}


\section{INTRODUCTION}
Sequential Quadratic Programming (SQP) is one of the most effective methods for solving nonlinear optimization problems. The idea of SQP is to iteratively approximate the Nonlinear Programming (NLP) problem by a sequence of Quadratic Programming (QP) subproblems~\cite{BoggsSQP1995}. The QP subproblems should be constructed in a way that the resulting sequence of solutions converges to a local optimum of the NLP.


There are different ways to construct the QP subproblems. When the exact Hessian is used to construct the QP subproblems, local convergence with quadratic convergence rate is guaranteed. However, the true Hessian can be indefinite when far from the solution. Consequently, the QP subproblems are non-convex and generally difficult to solve, since the objective may be unbounded below and there may be many local solutions~\cite{Gill2015}. Moreover, computing the exact Hessian is generally expensive, which makes SQP with exact Hessian difficult to apply to large-scale problems and real-time applications. 

To overcome these drawbacks, positive (semi-) definite Hessian approximations are usually used in practice. SQP methods using Hessian approximations generally guarantee local convergence under some assumptions. Some SQP variants employ iterative updates scheme for the Hessian approximation to keep it close to the true Hessian. Broyden-Fletcher-Goldfarb-Shanno (BFGS) is one of the most popular update schemes of this type~\cite{Powell1978,Fletcher1987}. The BFGS-SQP version guarantees superlinear convergence when the initial Hessian estimate is close enough to the true Hessian~\cite{BoggsSQP1995}. Another variant which is very popular for constrained nonlinear least square problems is the Generalized Gauss-Newton (GGN) method~\cite{Bock1983,Bock2015}. GGN method converges locally only if the residual function is small at the solution~\cite{DiehlThesis}. Some other SQP variants belong to the class of Sequential Convex Programming (SCP), or Sequential Convex Quadratic Programming (SCQP) methods, which exploit convexity in either the objective or the constraint functions to formulate convex QP subproblems~\cite{TranDinh2012,VerschuerenSCQP2016}. SCP methods also have local convergence under similar assumption of small residual function. However, these assumptions are not always satisfied in practice, resulting in poor Hessian approximation and thus no convergence is guaranteed.


This paper proposes a simple method to guarantee local convergence for SQP methods with poor Hessian approximations. The proposed method interpolates between the search direction provided by solving the QP subproblem and a feasible search direction. It is proven that there exists a suitable interpolation coefficient such that the resulting algorithm converges locally to a local optimum of the NLP with linear convergence rate. A numerical example is presented to demonstrate the effectiveness of the proposed method.

The idea of interpolating an optimal search direction with a feasible search direction was proposed in our previous work for quadratic optimization problems with nonlinear equality constraints~\cite{NguyenTTCDC2016}. The method proposed in~\cite{NguyenTTCDC2016} was applied effectively to a practical application in commutation of linear motors~\cite{NguyenTTIFAC2016}. This paper extends the idea to general nonlinear programming problems.

The remainder of this paper is organized as follows. Section~\ref{sec:notation} introduces the notation used in the paper. Section~\ref{sec:basicSQP} reviews the basic SQP method. Section~\ref{sec:proposedmethod} presents the proposed algorithm and proves the optimality property and local convergence property of the algorithm. An example is shown in Section~\ref{sec:example} for demonstration. Section~\ref{sec:conclusions} summarizes the conclusions.

\section{NOTATION}\label{sec:notation}
Let $\Nset$ denote the set of natural numbers, $\mathbb{R}$ denote the set of real numbers. The notation $\mathbb{R}_{[c_1,c_2)}$ denotes the set $\{ c\in \mathbb{R} : c_1 \leq c < c_2\}$. Let $\mathbb{R}^{n}$ denote the set of real column vectors of dimension $n$, $\mathbb{R}^{n \times m}$ denote the set of real $n \times m$ matrices. For a vector $x \in \Rset^n$, $x_{[i]}$ denotes the $i$-th element of $x$. The notation $0_{n \times m}$ denotes the $n \times m$ zero matrix and $I_n$ denotes the $n \times n$ identity matrix. Let $\|\cdot\|_2$ denote the 2-norm. The Nabla symbol $\nabla$ denotes the gradient operator. For a vector $x \in \Rset^n$ and a mapping $\Phi:\Rset^n \rightarrow \Rset$
\begin{equation*}
\nabla_x \Phi(x) = \begin{bmatrix}
\frac{\partial \Phi(x)}{\partial x_{[1]}} & \frac{\partial \Phi(x)}{\partial x_{[2]}} & \ldots & \frac{\partial \Phi(x)}{\partial x_{[n]}}
\end{bmatrix} .
\end{equation*}
Let $\cB (x_0, r)$ denote the open ball $\{x \in \Rset^n : \|x -x_0 \|_2 < r \}$.

\section{THE BASIC SQP METHOD}\label{sec:basicSQP}
This section reviews the basic SQP method. Consider the nonlinear optimization problem with nonlinear equality constraints:
\begin{problem}[NLP]\label{prb:optimization_prob}
	\begin{IEEEeqnarray}{rrl}
		& \min_{x} ~~~ & F_1(x)  \nonumber\\
		&\text{subject to} ~~~ & F_2(x)=0_{m \times 1} , \nonumber
	\end{IEEEeqnarray}
\end{problem}
where $x \in \mathbb{R}^n$, $F_1:\mathbb{R}^n \rightarrow \mathbb{R}$ and $F_2 : \mathbb{R}^n \rightarrow\mathbb{R}^m$. Here, $n$ is the number of optimization variables and $m$ is the number of constraints. In this paper, we are only interested in the case when the constraint set has an infinite number of points, i.e. $m<n$, since the other cases are trivial. Furthermore, let us assume that the columns of $\nabla_x F_2(x)^T$ are linearly independent at the solutions of the NLP.

For ease of presentation, in this paper we only consider equality constraints. The method can be extended to inequality constraints using an active set strategy or squared slack variables~\cite[Section 4]{BoggsSQP1995}. 

First, let us define the Lagrangian function of the NLP Problem~\ref{prb:optimization_prob} 
\begin{equation} \label{eqn:Lagrangian}
\cL(x,\lambda) := F_1(x) + \lambda^T F_2(x) ,
\end{equation}
where $\lambda\in \mathbb{R}^m$ is the Lagrange multipliers vector. The Karush-Kuhn-Tucker (KKT) optimality conditions of Problem~\ref{prb:optimization_prob} are
\begin{equation}\label{eqn:KKTcons}
\resizebox{.99\hsize}{!}{$
\nabla_{[x , \lambda]^T} \cL(x_*,\lambda_*)^T = \begin{bmatrix}
J_1(x_*)^T + J_2(x_*)^T \lambda_* \\
F_2(x_*)
\end{bmatrix} = 0_{(n+m)\times 1},
$}
\end{equation}  
where $J_1(x) := \nabla_x F_1(x)$ and $J_2(x) := \nabla_x F_2(x)$ are the Jacobian matrices of~$\nabla_x F_1(x)$ and~$\nabla_x F_2(x)$. Note that $J_1(x) \in \Rset^{1 \times n}$ and $J_2(x) \in \Rset^{m \times n}$.

The solution of the optimization problem is searched for in an iterative way. At a current iterate $x_k$, the next iterate is computed as
\begin{equation} \label{eqn:update}
x_{k+1} = x_k + \Delta x_k ,
\end{equation}
where $\Delta x_k$ is the search direction. In SQP methods, the search direction $\Delta x^{SQP}_k$ is the solution of the following QP subproblem
\begin{problem}[QP Subproblem]\label{prb:QPsub_prob}
	\begin{IEEEeqnarray}{rrl}
		& \min_{\Delta x_k} ~~~ & \frac{1}{2} \Delta x_k^T B_k \Delta x_k + J_{1|k} \Delta x_k \nonumber\\
		&\text{subject to} ~~~ & J_{2|k} \Delta x_k + F_{2|k}=0_{m \times 1} , \nonumber
	\end{IEEEeqnarray}
\end{problem}
where we introduce the following notation for brevity
\begin{align*}
F_{1|k} &:= F_1(x_k) , ~ F_{2|k} := F_2(x_k) \\
J_{1|k} &:= J_1(x_k) ,~ J_{2|k} := J_2(x_k) .
\end{align*}
Here, $B_k \in \Rset^{n \times n}$ is either the exact Hessian of the Lagrangian $\nabla_{xx}^2 \cL (x,\lambda)$, or a positive (semi-) definite approximation of the Hessian. Similar to~\cite{BoggsSQP1995}, to guarantee that the QP subproblem has a unique solution, we assume that the matrices $B_k$ satisfy the following conditions:
\begin{assumption} \label{asu:Bkpositive}
	The matrices $B_k$ are uniformly positive definite on the null spaces of the matrices $J_{2|k}$, i.e., there exists a $\beta_1>0$ such that for each $k$
	\begin{equation*}
	d^T B_k d \geq \beta_1 \|d\|_2^2 ,
	\end{equation*}
	for all $d\in \Rset^n$ which satisfy
	\begin{equation*}
	J_{2|k} d = 0_{m \times 1} .
	\end{equation*}
\end{assumption}

\begin{assumption} \label{asu:Bkbounded}
	The sequence $\{B_k\}$ is uniformly bounded, i.e, there exists a $\beta_2>0$ such that for each $k$
	\begin{equation*}
	\|B_k\|_2 \leq \beta_2.
	\end{equation*}
\end{assumption}


The KKT optimality conditions of the QP subproblem~\ref{prb:QPsub_prob} are
\begin{IEEEeqnarray}{rrl} 
& & B_k \Delta x^{SQP}_k + J_{1|k}^T + J_{2|k}^T \lambda_{k+1} = 0_{n\times 1} , \label{eqn:QPKKT1} \\
& & J_{2|k} \Delta x^{SQP}_k + F_{2|k} = 0_{m \times 1} , \label{eqn:QPKKT2} 
\end{IEEEeqnarray}
or equivalently
\begin{equation}   \label{eqn:QPKKT}
\begin{bmatrix}
B_k & J_{2|k}^T \\ J_{2|k} & 0_{m\times m}
\end{bmatrix} \begin{bmatrix}
\Delta x^{SQP}_k \\ \lambda_{k+1}
\end{bmatrix} = - \begin{bmatrix}
J_{1|k}^T \\ F_{2|k}
\end{bmatrix} .
\end{equation}
It should be noted that $B_k$ is positive (semi-) definite and is not necessarily invertible, but the matrix $\begin{bmatrix}
B_k & J_{2|k}^T \\ J_{2|k} & 0_{m\times m}
\end{bmatrix}$ is invertible due to Assumption~\ref{asu:Bkpositive}~\cite[Theorem 3.2]{Benzi2005}. Therefore, the KKT condition~\eqref{eqn:QPKKT} has a unique solution
\begin{equation}   \label{eqn:solutionQPKKT}
 \begin{bmatrix}
		\Delta x^{SQP}_k \\ \lambda_{k+1}
	\end{bmatrix} = - 	\begin{bmatrix}
	B_k & J_{2|k}^T \\ J_{2|k} & 0_{m\times m}
\end{bmatrix}^{-1} \begin{bmatrix}
		J_{1|k}^T \\ F_{2|k}
	\end{bmatrix} .
\end{equation}
For convergence analysis, it is convenient to have an explicit expression of $\Delta x^{SQP}_k$. Since $J_{2|k}^T J_{2|k}$ is positive semidefinite and $B_k$ is positive deinite on the null space of $J_{2|k}$, there exists a constant $c_0$ such that~\cite[Lemma 3.2.1]{Bertsekasbook1999}
\begin{equation}
B_k + c J_{2|k}^T J_{2|k} \succ 0 ,~ \forall c > c_0 .
\end{equation}
Let us define
\begin{IEEEeqnarray*}{rrl} 
C_k &:=& B_k + \bar{c} J_{2|k}^T J_{2|k} , \text{ where } \bar{c}> c_0 , \\
D_k &:=& J_{2|k} C_k^{-1} J_{2|k}^T .
\end{IEEEeqnarray*}
We have that $C_k$ and $D_k$ are positive definite due to Assumption~\ref{asu:Bkpositive}. It holds that~\cite[Chapter 6]{Bernstein2009matrix}
\begin{IEEEeqnarray}{rrl} \label{eqn:invSaddlePointMatrix}
&& \begin{bmatrix}
B_k & J_{2|k}^T \\ J_{2|k} & 0_{m\times m}
\end{bmatrix}^{-1} \nonumber\\
&=& \begin{bmatrix}
C_k^{-1} - C_k^{-1} J_{2|k}^T D_k^{-1} J_{2|k} C_k^{-1} & C_k^{-1} J_{2|k}^T D_k^{-1} \\ (C_k^{-1} J_{2|k}^T D_k^{-1})^T & I_m - D_k^{-1}
\end{bmatrix} .
\end{IEEEeqnarray}
The solution $\Delta x^{SQP}_k$ can then be written in an explicit form
\begin{equation}   \label{eqn:SQPDx}
	\Delta x^{SQP}_k = -\left(I_n - T^{C_k}_{2|k} J_{2|k} \right) C_k^{-1} J_{1|k}^T - T^{C_k}_{2|k} F_{2|k} ,
\end{equation}
where 
\begin{equation*} 
	T^{C_k}_{2|k} := C_k^{-1} J_{2|k}^T (J_{2|k} C_k^{-1} J_{2|k}^T )^{-1} .
\end{equation*}
Notice that $T^{C_k}_{2|k} \in \Rset^{n \times m}$ is a generalized right inverse of $J_{2|k}$, i.e. $J_{2|k} T^{C_k}_{2|k} = I_m$.

It should be noted that if $B_k$ is nonsingular then $\Delta x^{SQP}_k$ can also be written as
\begin{equation}   \label{eqn:SQPDxBnonsingular}
\Delta x^{SQP}_k = -\left(I_n - T^{B_k}_{2|k} J_{2|k} \right) B_k^{-1} J_{1|k}^T - T^{B_k}_{2|k} F_{2|k} ,
\end{equation}
where
\begin{equation*} 
	T^{B_k}_{2|k} := B_k^{-1} J_{2|k}^T (J_{2|k} B_k^{-1} J_{2|k}^T )^{-1} .
\end{equation*}
In this case, both~\eqref{eqn:SQPDx} and~\eqref{eqn:SQPDxBnonsingular} give the same solution.

If $B_k$ is the exact Hessian then the basic SQP method is equivalent to applying Newton's method to solve the KKT conditions~\eqref{eqn:KKTcons}, which guarantees quadratic local convergence rate~\cite[Chapter 18]{Nocedalbook}. When an approximation is used instead, local convergence is guaranteed only when $B_k$ is close enough to the true Hessian. The readers are referred to~\cite[Section 3]{BoggsSQP1995} for more details on local convergence of SQP.

\section{PROPOSED METHOD}\label{sec:proposedmethod}
This section proposes a simple method to guarantee local convergence for SQP with poor Hessian approximation. The proposed method interpolates between an optimal search iteration, without local convergence guarantee, and a feasible search iteration with guaranteed local convergence.

The search direction $\Delta x^{SQP}_k$ can be viewed as the optimal direction which iteratively leads to the optimal solution of the NLP, if the iteration converges. However, local convergence is not guaranteed if $B_k$ is a poor approximation of the true Hessian. 

To guarantee local convergence with poor Hessian approximation, we propose a new search direction which is the interpolation between the optimal search direction $\Delta x^{SQP}_k$ and a feasible search direction $\Delta x^{f}_k$, i.e.
\begin{equation} \label{eqn:finalDx}
\Delta x_k = \alpha \Delta x^{SQP}_k + (1-\alpha ) \Delta x^{f}_k ,
\end{equation}
where $\alpha\in\Rset_{(0,1)}$. The feasible search direction $\Delta x^{f}_k$ only searches for a feasible solution of the set of constraints, but its local convergence is guaranteed. The idea of this proposed interpolated update is to combine the optimality property of the SQP update and the local convergence property of the feasible update. 

The feasible search direction can be found as a solution of the linearized constraints
\begin{equation} \label{eqn:feasbilecons}
J_{2|k} \Delta x^{f}_k + F_{2|k} =0_{m \times 1} .
\end{equation}
Since $m<n$, there is an infinite number of solutions for~\eqref{eqn:feasbilecons}. Two possible solutions are
\begin{IEEEeqnarray}{rrl} 
\Delta x^{f1}_k & = & - T^{C_k}_{2|k} F_{2|k} , \label{eqn:feasibleDx1}\\
\Delta x^{f2}_k & = & - T_{2|k} F_{2|k} , \label{eqn:feasibleDx2}
\end{IEEEeqnarray}
where $T_{2|k} \in \Rset^{n \times m}$ is the Moore-Penrose generalized right inverse of~$J_{2|k}$~\cite{Rao1972}, i.e.
\begin{equation*} 
T_{2|k} :=  J_{2|k}^T (J_{2|k} J_{2|k}^T )^{-1} .
\end{equation*}
We propose the following feasible search direction 
\begin{equation} \label{eqn:feasbileDx}
\Delta x^{f}_k = - \left(\frac{1}{1-\alpha} T_{2|k} - \frac{\alpha}{1-\alpha}T^{C_k}_{2|k} \right) F_{2|k} .
\end{equation}
It can be verified that~$\Delta x^{f}_k$ is a solution of~\eqref{eqn:feasbilecons} as follows
\begin{IEEEeqnarray}{rrl}  
J_{2|k}^T \Delta x^{f}_k &=& - \left(\frac{1}{1-\alpha} J_{2|k}^T T_{2|k} - \frac{\alpha}{1-\alpha} J_{2|k}^T T^{C_k}_{2|k} \right) F_{2|k} \nonumber\\
&=& - \left(\frac{1}{1-\alpha} I_m - \frac{\alpha}{1-\alpha} I_m \right) F_{2|k} \nonumber \\
&=& -F_{2|k} .
\end{IEEEeqnarray}
It has been proven that the feasible updates~\eqref{eqn:feasibleDx1}, \eqref{eqn:feasibleDx2} and~\eqref{eqn:feasbileDx} converge locally to a feasible solution of the constraints~\cite{BENISRAEL1966,Levin2001}.

It is worth mentioning that using the search direction $\Delta x^{f2}_k$ in~\eqref{eqn:feasibleDx2} can also guarantee local convergence for the interpolated update. However, this search direction results in the presence of the term $\alpha T^{C_k}_{2|k} F_{2|k}$ in the interpolated update~\eqref{eqn:finalDx}, which unnecessarily increases the computational load. Therefore, the search direction $\Delta x^{f}_k$ in~\eqref{eqn:feasbileDx} is proposed to help eliminate the unnecessary term $\alpha T^{C_k}_{2|k} F_{2|k}$ from the interpolated update~\eqref{eqn:finalDx}.

Substituting~\eqref{eqn:SQPDx} and~\eqref{eqn:feasbileDx} into the interpolated update~\eqref{eqn:finalDx} results in
\begin{equation}  \label{eqn:finalupdate}
\Delta x_k = - \alpha \left( I_n - T^{C_k}_{2|k} J_{2|k} \right) C_k^{-1} J_{1|k}^T - T_{2|k} F_{2|k} .
\end{equation}
For brevity, let us denote $G: \mathbb{R}^n \rightarrow \mathbb{R}^n$ as follows
\begin{equation} \label{eqn:Gk}
G_k:= \left( I_n - T^{C_k}_{2|k} J_{2|k} \right) C_k^{-1} J_{1|k}^T.
\end{equation}

In what follows we will prove the optimality property and the local convergence property of the proposed search iteration~\eqref{eqn:finalupdate}.



\begin{theorem}   \label{thm:optimality}
	If the iteration~\eqref{eqn:finalupdate} converges to a fixed point $x_*$, then $x_*$ satisfies the KKT optimality conditions~\eqref{eqn:KKTcons}.
\end{theorem}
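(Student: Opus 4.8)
\emph{Overview.} The plan is to use the hypothesis $x_k\to x_*$ to force $\Delta x_k=x_{k+1}-x_k\to 0$, and then read off the two blocks of the KKT system~\eqref{eqn:KKTcons} from the limiting form of the interpolated update~\eqref{eqn:finalupdate}. Throughout I would rely on continuity of $F_1,F_2$ (hence of $J_{1|k}=J_1(x_k)$, $J_{2|k}=J_2(x_k)$, $F_{2|k}=F_2(x_k)$), on Assumptions~\ref{asu:Bkpositive}--\ref{asu:Bkbounded}, and on the fact that $\{x_k\}$, being convergent, lies in a compact set, so $\sup_k\|J_{2|k}\|_2<\infty$ and $J_{2|*}J_{2|*}^T$ is invertible.

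\emph{Primal feasibility.} First I would record the two identities $J_{2|k}G_k=0_{m\times 1}$ (immediate from~\eqref{eqn:Gk} and $J_{2|k}T^{C_k}_{2|k}=I_m$) and $J_{2|k}T_{2|k}=I_m$. Left-multiplying~\eqref{eqn:finalupdate} by $J_{2|k}$ then gives $J_{2|k}\Delta x_k=-F_{2|k}$ for every $k$. Passing to the limit and using $\Delta x_k\to 0$ together with continuity yields $F_2(x_*)=0_{m\times 1}$, the second block of~\eqref{eqn:KKTcons}.

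\emph{Stationarity.} Knowing $F_{2|k}\to 0_{m\times1}$, I would rearrange~\eqref{eqn:finalupdate} as $\alpha G_k=-\Delta x_k-T_{2|k}F_{2|k}$; since $T_{2|k}\to T_{2|*}$ and the right-hand side tends to zero while $\alpha\in\Rset_{(0,1)}$ is nonzero, we get $G_k\to 0_{n\times1}$. Multiplying~\eqref{eqn:Gk} by $C_k$ and using $C_kT^{C_k}_{2|k}=J_{2|k}^T(J_{2|k}C_k^{-1}J_{2|k}^T)^{-1}$ gives
\begin{equation*}
C_kG_k=J_{1|k}^T+J_{2|k}^T\lambda_k',\qquad \lambda_k':=-\bigl(J_{2|k}C_k^{-1}J_{2|k}^T\bigr)^{-1}J_{2|k}C_k^{-1}J_{1|k}^T .
\end{equation*}
Because $\|C_k\|_2\le\beta_2+\bar{c}\sup_k\|J_{2|k}\|_2^2<\infty$, it follows that $\|J_{1|k}^T+J_{2|k}^T\lambda_k'\|_2=\|C_kG_k\|_2\to 0$, i.e.\ the distance from $J_{1|k}^T$ to $\mathrm{range}(J_{2|k}^T)$ tends to zero. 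Writing $\Pi_k:=J_{2|k}^T(J_{2|k}J_{2|k}^T)^{-1}J_{2|k}$ for the orthogonal projector onto $\mathrm{range}(J_{2|k}^T)$, this says $\|(I_n-\Pi_k)J_{1|k}^T\|_2\to 0$; since $J_{2|k}\to J_{2|*}$ with $J_{2|*}J_{2|*}^T$ invertible, $\Pi_k\to\Pi_*$, so $(I_n-\Pi_*)J_{1|*}^T=0_{n\times1}$. Hence $J_1(x_*)^T\in\mathrm{range}(J_2(x_*)^T)$, so there exists $\lambda_*\in\Rset^m$ with $J_1(x_*)^T+J_2(x_*)^T\lambda_*=0_{n\times1}$, the first block of~\eqref{eqn:KKTcons}; combined with primal feasibility, $x_*$ satisfies~\eqref{eqn:KKTcons}.

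\emph{Main obstacle.} The delicate point is the stationarity step: the Hessian approximations $B_k$ (hence $C_k$ and $C_k^{-1}$) need not converge, so one cannot simply speak of a limiting ``$G_*$'' and multiply by a limiting ``$C_*$''. I would therefore pass to the limit only in quantities that are guaranteed to converge — namely $F_{2|k}$ and the residual $(I_n-\Pi_k)J_{1|k}^T$ — after using Assumptions~\ref{asu:Bkpositive}--\ref{asu:Bkbounded} to bound $\|C_kG_k\|_2$ uniformly. (If one is content to assume $B_k\to B_*$, e.g.\ when $B_k$ is a continuous function of $x_k$, the argument collapses to: $\Delta x_*=0\Rightarrow F_{2|*}=0\Rightarrow G_*=0\Rightarrow C_*G_*=J_{1|*}^T+J_{2|*}^T\lambda_*=0$.)
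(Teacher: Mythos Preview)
Your proof is correct, but the paper's argument is noticeably shorter because it reads ``fixed point'' literally rather than as ``limit point'': at $x_*$ the step satisfies $\Delta x_*=0$ by definition, so $J_{2|*}\Delta x_*+F_{2|*}=0$ gives $F_{2|*}=0$; then the feasible direction~\eqref{eqn:feasbileDx} vanishes, and since $\Delta x_*=\alpha\,\Delta x^{SQP}_*+(1-\alpha)\,\Delta x^f_*$, the SQP step must vanish too; the first block of~\eqref{eqn:KKTcons} then drops straight out of the QP KKT condition~\eqref{eqn:QPKKT1} evaluated at $x_*$. In other words, the paper works pointwise at $x_*$ (implicitly with some Hessian approximation $B_*$ available there), not along the sequence. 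Your route---showing $\|C_kG_k\|_2\to 0$ and then passing through the orthogonal projector $\Pi_k$ onto $\mathrm{range}(J_{2|k}^T)$ to decouple the conclusion from $C_k$---is a genuine refinement: it establishes stationarity without ever assuming that $B_k$ (hence $C_k$) converges, precisely the issue you flagged as the ``main obstacle.'' The cost is a longer argument; the benefit is that your proof covers Hessian approximations that depend on the full iteration history and need not settle down. As you yourself note in the final paragraph, once one grants $B_k\to B_*$ (for instance when $B_k=B(x_k)$ with $B(\cdot)$ continuous, which holds for every example in the paper), the two arguments coincide.
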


\begin{proof}  
Let us denote
\begin{IEEEeqnarray}{rrl} 
	&F_{1*} := F_1(x_*), ~ & F_{2*} := F_2(x_*) , \nonumber \\
	&J_{1*} := J_1(x_*), ~ & J_{2*} := J_2(x_*) . \nonumber
\end{IEEEeqnarray}
From~\eqref{eqn:QPKKT2}, \eqref{eqn:feasbilecons} and~\eqref{eqn:finalDx}, it follows that
\begin{equation} \label{eqn:feasible_condition}
J_{2|k} \Delta x_k + F_{2|k} =0_{m\times 1}  .
\end{equation}
By definition, $x_*$ is a fixed point of the proposed iteration~\eqref{eqn:finalupdate} if
\begin{equation} \label{eqn:Dxeq0}
\Delta x_* = 0_{n\times 1}.
\end{equation}
As a result we have
\begin{equation} \label{eqn:provedKKT2}
F_{2*} = 0_{m\times 1} .
\end{equation}
Substituting~\eqref{eqn:provedKKT2} into~\eqref{eqn:feasbileDx} results in
\begin{equation}  \label{eqn:Dxfeq0}
\Delta x^{f}_* = 0_{n\times 1}.
\end{equation}
It follows from~\eqref{eqn:finalDx}, \eqref{eqn:Dxeq0} and \eqref{eqn:Dxfeq0} that
\begin{equation}  \label{eqn:DxSQPeq0}
\Delta x^{SQP}_* = 0_{n\times 1}.
\end{equation}
Due to~\eqref{eqn:QPKKT1} and~\eqref{eqn:DxSQPeq0} we have
\begin{equation} \label{eqn:provedKKT1}
J_{1*}^T + J_{2*}^T \lambda_{*} = 0_{n\times 1} .
\end{equation}
From~\eqref{eqn:provedKKT2} and~\eqref{eqn:provedKKT1}, it can be concluded that $x_*$ satisfies the KKT optimality conditions~\eqref{eqn:KKTcons}.
\end{proof}

Next, we will prove local convergence of the proposed iteration. Let us assume that the approximations $B_k$ satisfy the following condition
\begin{assumption} \label{asu:BkLipschitz}
	There exists a $\beta_3>0$ such that for each $k\geq 1$
	\begin{equation*}
	\|B_{k} - B_{k-1} \|_2 \leq \beta_3 \|x_{k} - x_{k-1} \|_2 .
	\end{equation*}
\end{assumption}
The following proposition will be used in the proof.
\begin{proposition}\label{pro:meanvalue}
	Let $\cD \subseteq \Rset^n$ be a convex set in which $F_2:\cD \rightarrow \mathbb{R}^m$ is differentiable and $J_2(x)$ is Lipschitz continuous for all~$x \in \cD$, i.e. there exists a $\gamma>0$ such that
	\begin{equation}
	\| J_2(x)-J_2(y) \|_2 \leq 2 \gamma \|x-y \|_2 , \forall  x,y \in \cD .
	\end{equation}
	Then
	\begin{equation}
	\| F_2(x)-F_2(y)-J_2(y)(x-y) \|_2 \leq \gamma \|x-y \|_2^2 , \forall  x,y \in \cD .
	\end{equation}
\end{proposition}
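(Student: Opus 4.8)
The plan is to reduce the statement to the fundamental theorem of calculus applied along the line segment joining $y$ to $x$, which is legitimate because $\cD$ is convex, and then to estimate the resulting integral using the Lipschitz hypothesis on $J_2$. Concretely, for fixed $x,y\in\cD$ I would define $\phi:[0,1]\rightarrow\Rset^m$ by $\phi(t):=F_2\bigl(y+t(x-y)\bigr)$; convexity of $\cD$ guarantees $y+t(x-y)\in\cD$ for all $t\in[0,1]$, so $\phi$ is well defined, and differentiability of $F_2$ on $\cD$ gives $\phi'(t)=J_2\bigl(y+t(x-y)\bigr)(x-y)$ by the chain rule. Applying the fundamental theorem of calculus componentwise yields
\begin{equation*}
F_2(x)-F_2(y)=\phi(1)-\phi(0)=\int_0^1 J_2\bigl(y+t(x-y)\bigr)(x-y)\,dt .
\end{equation*}

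Next I would subtract $J_2(y)(x-y)=\int_0^1 J_2(y)(x-y)\,dt$ from both sides and take norms, using the standard inequality $\bigl\|\int_0^1 g(t)\,dt\bigr\|_2\le\int_0^1\|g(t)\|_2\,dt$ together with submultiplicativity of the induced $2$-norm:
\begin{equation*}
\bigl\|F_2(x)-F_2(y)-J_2(y)(x-y)\bigr\|_2 \le \int_0^1 \bigl\|J_2\bigl(y+t(x-y)\bigr)-J_2(y)\bigr\|_2 \,\|x-y\|_2\,dt .
\end{equation*}
The Lipschitz hypothesis with constant $2\gamma$, evaluated at the points $y+t(x-y)$ and $y$ which differ by $t(x-y)$, bounds the integrand by $2\gamma t\,\|x-y\|_2^2$. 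Since $\int_0^1 2\gamma t\,dt=\gamma$, the right-hand side is at most $\gamma\|x-y\|_2^2$, which is the claim; the factor $2$ in the stated Lipschitz constant is exactly what makes the final constant come out as $\gamma$.

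I do not expect a serious obstacle here — this is a textbook ``linearization remainder'' estimate. The only point that deserves a sentence of care is the applicability of the fundamental theorem of calculus: the proposition only assumes $F_2$ differentiable, but Lipschitz continuity of $J_2$ on $\cD$ makes $J_2$ continuous, hence $F_2$ is $C^1$ on $\cD$ and $\phi$ is $C^1$ on $[0,1]$, so the integral representation is valid and the integrand above is continuous in $t$. Beyond that, the argument is just the triangle inequality for vector-valued integrals plus a one-line integration.
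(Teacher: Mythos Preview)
Your argument is correct and is the standard linearization-remainder estimate via the fundamental theorem of calculus along the segment from $y$ to $x$; the care you take about $F_2$ being $C^1$ (since $J_2$ is Lipschitz, hence continuous) is appropriate and closes the only possible gap. The paper does not actually give its own proof of this proposition --- it simply cites an external reference --- so there is no in-paper argument to compare against; your write-up is exactly the kind of self-contained proof one would expect to find behind that citation.
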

A proof of Proposition~\ref{pro:meanvalue} can be found in~\cite{Levin2001}.


\begin{theorem} \label{thm:convergence}
Let $\cD \subset \Rset^n$ be a bounded convex set in which the following conditions hold
\begin{enumerate}[(i)]
\item $F_1(x)$ and $F_2(x)$ are Lipschitz continuous and continuosly differentiable,
\item $J_1(x)$ and $J_2(x)$ are Lipschitz continuous and bounded,
\item $T_2(x)$ is bounded,
\item there exists a solution $x_* $ of the KKT optimality conditions~\eqref{eqn:KKTcons} in~$\cD$.
\end{enumerate}
Then there exist a $\alpha \in\Rset_{(0,1)}$ and a $r \in \Rset_{>0}$ such that $\cB(x_*,r) \subseteq \cD$ and iteration~\eqref{eqn:finalupdate} converges to $x_* $ for any initial estimate $x_0 \in \cB(x_*,r)$.
\end{theorem}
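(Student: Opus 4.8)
The plan is to track the error $e_k:=x_k-x_*$ and show that, for $\alpha$ small and $x_0$ close to $x_*$, $\|e_k\|_2$ decreases geometrically. By~\eqref{eqn:finalupdate} the recursion is $e_{k+1}=e_k-\alpha G_k-T_{2|k}F_{2|k}$, so I need to control the feasibility term $T_{2|k}F_{2|k}$ and the tangential term $\alpha G_k$. First I would record the uniform bounds available on $\cD$: Assumptions~\ref{asu:Bkpositive}--\ref{asu:Bkbounded} together with conditions~(ii)--(iii) — which in particular force $J_{2|k}J_{2|k}^T$ to be uniformly nonsingular, since $\|T_{2|k}\|_2^2=\|(J_{2|k}J_{2|k}^T)^{-1}\|_2$ — yield a single $\bar c>c_0$ valid for all $k$ and uniform bounds on $\|C_k^{-1}\|_2$, on $\|T^{C_k}_{2|k}\|_2$ and on the oblique projector $Q_k:=I_n-T^{C_k}_{2|k}J_{2|k}$, which satisfies $J_{2|k}Q_k=0_{m\times n}$, $Q_k^2=Q_k$ and $Q_kC_k^{-1}J_{2|k}^T=0_{n\times m}$.

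Next I would derive two estimates. For the feasibility term, using $F_2(x_*)=0_{m\times1}$, convexity of $\cD$ and Proposition~\ref{pro:meanvalue}, $F_{2|k}=J_{2|k}e_k-\rho_k$ with $\|\rho_k\|_2\le\gamma\|e_k\|_2^2$, so $T_{2|k}F_{2|k}=P_ke_k+\cO(\|e_k\|_2^2)$, where $P_k:=T_{2|k}J_{2|k}$ is the orthogonal projector onto $\mathrm{range}(J_{2|k}^T)$. For the tangential term I would use that $x_*$ is a KKT point, i.e.\ $J_{1*}^T=-J_{2*}^T\lambda_*$, together with $Q_kC_k^{-1}J_{2|k}^T=0_{n\times m}$, to write
\begin{equation*}
G_k=Q_kC_k^{-1}\big((J_{1|k}^T-J_{1*}^T)-(J_{2*}^T-J_{2|k}^T)\lambda_*\big).
\end{equation*}
Lipschitz continuity of $J_1,J_2$ then gives $\|G_k\|_2\le c_G\|e_k\|_2$ with $c_G$ independent of $\alpha$, and a Taylor expansion of $J_1,J_2$ about $x_*$ refines this to $G_k=\Theta_kH_*e_k+\cO(\|e_k\|_2^2)$, where $H_*:=\nabla^2_{xx}\cL(x_*,\lambda_*)$ is the true Hessian of the Lagrangian and $\Theta_k:=Q_kC_k^{-1}=C_k^{-1}-C_k^{-1}J_{2|k}^T(J_{2|k}C_k^{-1}J_{2|k}^T)^{-1}J_{2|k}C_k^{-1}$ is symmetric, positive semidefinite, with kernel $\mathrm{range}(J_{2|k}^T)$ and range equal to $\mathrm{null}(J_{2|k})$.

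I would then split $e_k=t_k+n_k$ into its components tangent and normal to $\mathrm{null}(J_{2*})$ at $x_*$, writing $\xi_k$ for the coordinates of $t_k$ in an orthonormal basis $Z$ of $\mathrm{null}(J_{2*})$. Since $G_k$ and $P_ke_k$ each differ from a vector of $\mathrm{null}(J_{2*})$ by $\cO(\|e_k\|_2^2)$, the normal part collapses quadratically, $\|n_{k+1}\|_2\le c_n\|e_k\|_2^2$ — this is, up to the $\alpha G_k$ perturbation, exactly the known convergence of the feasible iteration~\cite{BENISRAEL1966,Levin2001} — while the tangential part obeys a reduced recursion
\begin{equation*}
\xi_{k+1}=(I_{n-m}-\alpha M_k)\xi_k+\cO(\|e_k\|_2^2),\qquad M_k=(Z^T\Theta_kZ)(Z^TH_*Z)+\cO(\|e_k\|_2).
\end{equation*}
The crucial observation is that $Z^T\Theta_kZ$ is uniformly positive definite for $x_k$ near $x_*$ \emph{no matter how poor $B_k$ is} — because $\Theta_k\succeq0$ and $\mathrm{null}(J_{2*})\cap\mathrm{range}(J_{2|k}^T)=\{0_{n\times1}\}$ there — while $Z^TH_*Z$ is the true reduced Hessian, positive definite at the local optimum $x_*$; since the product of two positive definite matrices has real, positive eigenvalues, $M_k$ is self-adjoint and positive in the fixed inner product $\langle u,w\rangle:=u^T(Z^TH_*Z)w$, uniformly in $k$, so for $\alpha$ small enough $\|I_{n-m}-\alpha M_k\|\le1-\tfrac12\alpha\mu$ in that norm with some $\mu>0$ independent of $\alpha$. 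Combining the two components gives $\|e_{k+1}\|_2\le(1-\tfrac12\alpha\mu)\|e_k\|_2+c'\|e_k\|_2^2$ for $k\ge1$, the first step being only a bounded transient since $\|e_1\|_2\le(1+c_G\alpha+\cO(\|e_0\|_2))\|e_0\|_2$. Choosing first $\alpha\in\Rset_{(0,1)}$ small enough for the contraction estimate on $M_k$, and then $r\in\Rset_{>0}$ small enough that $c'r\le\tfrac14\alpha\mu$ and the whole (bounded) orbit stays in a ball contained in $\cD$, an induction on $k$ keeps all iterates in $\cD$ and forces $\|e_k\|_2\to0$ linearly, which is the claim.

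The step I expect to be the real obstacle is the tangential recursion. One must see that the poor Hessian $B_k$ affects the reduced dynamics only through the factor $Z^T\Theta_kZ$, which is \emph{automatically} positive definite, so that contractivity of $I_{n-m}-\alpha M_k$ is rescued purely by the true second-order behaviour at $x_*$ together with a sufficiently small $\alpha$; this is precisely why the interpolation in~\eqref{eqn:finalDx} works. Assumption~\ref{asu:BkLipschitz} is what keeps $M_k$, and the rest of the data of the reduced iteration, from varying abruptly between steps, so that one pair $(\alpha,\mu)$ serves uniformly. One should also note that the positive definiteness of $Z^TH_*Z$ used here is the standard second-order sufficient condition at $x_*$, implicit in $x_*$ being a local optimum of Problem~\ref{prb:optimization_prob}, and strictly ought to appear among the hypotheses.
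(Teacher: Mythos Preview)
Your argument is essentially correct and is the standard local-convergence analysis for SQP-type iterations, but it is \emph{not} the route the paper takes. The paper does not track $e_k=x_k-x_*$ at all; instead it studies the step sizes $\Delta x_k$ and derives the step-to-step recursion
\[
\Delta x_k=(I_n-T_{2|k-1}J_{2|k-1})\Delta x_{k-1}-\alpha(G_k-G_{k-1})-T_{2|k}F_{2|k},
\]
bounding the three pieces by, respectively, $M\|\Delta x_{k-1}\|_2$ with $M<1$ (using that $I_n-T_{2|k-1}J_{2|k-1}$ is the orthogonal projector onto $\mathrm{null}(J_{2|k-1})$ and that $\Delta x_{k-1}$ is never in that null space while $F_{2|k-1}\neq0$), $\alpha N\|\Delta x_{k-1}\|_2$ (Lipschitzness of $G$, this is where Assumption~\ref{asu:BkLipschitz} enters), and $L\|\Delta x_{k-1}\|_2^2$ (Proposition~\ref{pro:meanvalue}). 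Choosing $\alpha<(1-M)/N$ then yields $\|\Delta x_k\|_2\le(K+L\|\Delta x_{k-1}\|_2)\|\Delta x_{k-1}\|_2$ with $K<1$, and a short induction finishes. The paper never invokes $H_*$ or any second-order information at $x_*$.

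The trade-off is instructive. Your tangent/normal splitting and reduced recursion $\xi_{k+1}=(I-\alpha M_k)\xi_k+\cO(\|e_k\|_2^2)$ isolate exactly why the interpolation works --- the approximate Hessian contributes only the positive-definite factor $Z^T\Theta_kZ$, while contraction is driven by the true reduced Hessian $Z^TH_*Z$ --- and give a clean, uniform contraction constant; but, as you correctly note, this needs the second-order sufficient condition (and twice differentiability of $F_1,F_2$ so that $H_*$ exists), neither of which is among the stated hypotheses. The paper's argument formally avoids those extra assumptions, but at the price of the contraction constant $M$: the paper infers $M<1$ from strict inequality at each $k$, and a uniform $M<1$ over all $k$ is asserted rather than derived, so your approach is in fact the more robust of the two.
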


\begin{proof}
Let us consider two cases
\begin{itemize}
	\item $F_2(x)$ is linear.
	\item $F_2(x)$ is nonlinear.
\end{itemize}

1. \textit{Case 1}: in the first case when $F_2(x)$ is linear, for any iterate $x_k$ we can write
\begin{equation}
F_2(x)= F_{2|k} + J_{2|k} (x-x_k) .
\end{equation}
Since $\Delta x_k$ satisfies~\eqref{eqn:feasible_condition}, it follows that \begin{equation}
F_{2|k+1} = F_{2|k} + J_{2|k} (x_{k+1}-x_k)  = 0_{m\times 1} .
\end{equation}
Therefore, we have that $F_{2|k}=0_{m\times 1}$ for all $k \geq 1$. The interpolated update is then reduced to
\begin{equation}  \label{eqn:finalupdatelinear}
x_{k+1}= x_{k} +\alpha \Delta x^{SQP}_{k} , ~\forall k\geq 1.
\end{equation}
Let us denote
\begin{equation}
W_k:=C_k^{-1} - C_k^{-1} J_{2|k}^T D_k^{-1} J_{2|k} C_k^{-1} .
\end{equation}
From~\eqref{eqn:solutionQPKKT} and \eqref{eqn:invSaddlePointMatrix} we have
\begin{equation}
\Delta x^{SQP}_{k} = - W_{k} J_{1|k}^T , ~\forall k \geq 1.
\end{equation}
We have $\begin{bmatrix}
B_k & J_{2|k}^T \\ J_{2|k} & 0_{m\times m}
\end{bmatrix}$ is positive definite due to Assumption~\ref{asu:Bkpositive}. It follows that $W_{k}$ is positive definite, due to the facts that the inverse of a positive definite matrix is positive definite, and that every principal submatrix of a positive definite matrix is positive definite~\cite[Chapter 8]{Bernstein2009matrix}. As a result we have
\begin{equation}
J_{1|k} \Delta x^{SQP}_{k} = - J_{1|k} W_{k} J_{1|k}^T < 0 , ~\forall k \geq 1.
\end{equation}
This shows that $\Delta x^{SQP}_{k} $ is a descent direction that leads to a decrease in the cost function $F_1(x)$. In addition, since $F_{2|k}=0_{m\times 1}$ for all $k \geq 1$, we have that $\Delta x^{SQP}_{k} $ is also a feasible direction. Therefore, there exits a stepsize $\alpha \in \Rset_{(0,1)}$ such that the iteration~\eqref{eqn:finalupdatelinear} converges~\cite[Chapter 3]{Nocedalbook}.

2. \textit{Case 2}: let us now consider the case when $F_2(x)$ is nonlinear. Since the nonlinear constraints are solved by successive linearization~\eqref{eqn:feasible_condition}, we can assume that the solution is reached asymptotically, i.e. $F_{2|k-1} \to 0$ as $k \to \infty$ and $F_{2|k-1} \neq 0$ for all $k< \infty$. We have
\begin{IEEEeqnarray}{rrl} \label{eqn:recurrence_relation}
	\Delta x_k &=&	\Delta x_{k-1 } - \alpha (G_{k}-G_{k-1}) \nonumber\\ 
	&&-\left(T_{2|k} F_{2|k} - T_{2|k-1} F_{2|k-1} \right) \nonumber\\
	&=&	\Delta x_{k-1 } - \alpha (G_{k}-G_{k-1}) \nonumber\\ 
	&&-T_{2|k} F_{2|k} - T_{2|k-1} J_{2|k-1} \Delta x_{k-1 } \nonumber\\ 
	&=&	\left(I_n - T_{2|k-1} J_{2|k-1} \right) \Delta x_{k-1 } - \alpha (G_{k}-G_{k-1}) \nonumber\\ 
	&&-T_{2|k} F_{2|k} .
\end{IEEEeqnarray}
The second equality in~\eqref{eqn:recurrence_relation} was obtained due to equation~\eqref{eqn:feasible_condition}. 

Let us consider the first term on the right hand side of~\eqref{eqn:recurrence_relation}. Here, $(I_n - T_{2|k-1} J_{2|k-1})$ is the orthogonal projection onto the null space of $J_{2|k-1}$~\cite[Chapter 6]{Bernstein2009matrix}. It holds that
\begin{equation} \label{eqn:normprojection}
\| I_n - T_{2|k-1} J_{2|k-1} \|_2 = 1 .
\end{equation}
A proof of~\eqref{eqn:normprojection} can be found in~\cite{NguyenTTCDC2016}. It follows that
\begin{equation} 
\| (I_n-T_{2|k-1} J_{2|k-1}) \Delta x_{k-1 } \|_2  \leq \| \Delta x_{k-1 } \|_2 .
\end{equation}
The equality holds if and only if $\Delta x_{k-1 }$ is in the null space of $J_{2|k-1}$, which is equivalent to
\begin{equation} 
J_{2|k-1} \Delta x_{k-1 } = -F_{2|k-1} =0_{m \times 1} .
\end{equation}
This shows that the equality holds if and only if $x_{k-1}$ is an exact solution of the constraints. This contradicts the assumption that $F_{2|k-1} \neq 0$ for all $k< \infty$. Therefore, there exists a constant $M\in \Rset_{(0,1)}$ such that
\begin{equation}\label{eqn:ineq1}
\| (I_n-T_{2|k-1} J_{2|k-1}) \Delta x_{k-1 }  \|_2 \leq M \| \Delta x_{k-1 } \|_2 .
\end{equation}

Next, let us consider the second term on the right hand side of~\eqref{eqn:recurrence_relation}. Observe that by~\eqref{eqn:Gk}, the definition of the matrix inverse and the strict positive definiteness of $C_k$, each element of $G_k$ is obtained by adding, multiplying and/or division of real-valued functions. Division only occurs due to the inverse of $C_k$, via the term $\frac{1}{\det (C_k)}$. This allows the application of Theorem 12.4 and Theorem 12.5 in~\cite{ErikssonBook2004}, to establish Lipschitz continuity in $x$ of $G_k$, from Assumptions~\ref{asu:Bkbounded} and \ref{asu:BkLipschitz} and the conditions that $J_1(x)$ and $J_{2}(x)$ are Lipschitz continuous and bounded for all $x\in \cD$. Note that although the theorems in~\cite{ErikssonBook2004} consider functions from $\Rset$ to $\Rset$, the same arguments apply to functions from $\Rset^n$ to $\Rset$, by using an appropriate, norm-based Lipschitz inequality. As a result we have
\begin{equation}\label{eqn:ineq2}
\| G_{k}-G_{k-1} \|_2 \leq N \| \Delta x_{k-1 } \|_2 ,
\end{equation}
where $N>0$.

For the third term on the right hand side of~\eqref{eqn:recurrence_relation}, due to the condition that $T_2(x)$ is bounded and Proposition~\ref{pro:meanvalue}, we have
\begin{IEEEeqnarray}{rrl} \label{eqn:ineq3} 
\| T_{2|k} F_{2|k} \|_2 &\leq& \| T_{2|k} \|_2 \|F_{2|k} \|_2 \nonumber \\
&= &  \| T_{2|k} \|_2 \| F_{2|k}-F_{2|k-1}-J_{2|k-1} \Delta x_{k-1}\|_2 \nonumber \\
&\leq& \gamma \| T_{2|k} \|_2 \|\Delta x_{k-1}\|_2^2 \nonumber \\
&\leq& L \|\Delta x_{k-1}\|_2^2 , 
\end{IEEEeqnarray}
where $L>0$.

From~\eqref{eqn:recurrence_relation}, \eqref{eqn:ineq1}, \eqref{eqn:ineq2} and \eqref{eqn:ineq3}, it follows that
\begin{IEEEeqnarray}{rrl} \label{eqn:total_ineq}
	\| \Delta x_{k}\|_2 &\leq& (K + L \|\Delta x_{k-1}\|_2)  \|\Delta x_{k-1}\|_2  ,
\end{IEEEeqnarray}
where~$K= M +\alpha N$. We have $K <1$ for any $\alpha$ that satisfies
\begin{equation}
0<\alpha <\min \left(\frac{1-M}{N},1 \right) .
\end{equation}

From~\eqref{eqn:finalupdate}, \eqref{eqn:Dxeq0} and \eqref{eqn:provedKKT2}, it follows that
\begin{equation}
G_* = 0_{n \times 1} .
\end{equation}
Due to the Lipschitz continuity of $G(x)$ and $F_2(x)$, we have
\begin{IEEEeqnarray}{rrl} 
\| \Delta x_0 \|_2 &=& \|-\alpha G_0 - T_{2|0} F_{2|0} \|_2 \nonumber \\
&=& \| -\alpha (G_0 - G_*) - T_{2|0} (F_{2|0} - F_{2*}) \|_2 \nonumber \\
& \leq & Q \|x_0 - x_*\|_2 ,
\end{IEEEeqnarray}
where $Q>0$. If $x_0$ is close enough to $x_*$ such that 
\begin{equation} \label{eqn:ballradius}
\|x_0 - x_*\|_2 < \frac{1-K}{Q L} ,
\end{equation}
then 
\begin{equation} \label{eqn:induction0}
K  + L \| \Delta x_0 \|_2 < 1 .
\end{equation}
Next, we will prove that if
\begin{equation} \label{eqn:induction1}
K  + L \|\Delta x_{k-1}\|_2 < 1 ,
\end{equation}
then
\begin{equation} \label{eqn:induction2}
K  + L \|\Delta x_{k}\|_2 < 1 ,~\forall k=1,2,\ldots.
\end{equation}
Indeed, if~\eqref{eqn:induction1} holds then due to~\eqref{eqn:total_ineq} we have
\begin{equation} 
\|\Delta x_{k}\|_2 < \|\Delta x_{k-1}\|_2 .
\end{equation}
This leads to
\begin{equation}
K  + L \|\Delta x_{k}\|_2 < K  + L \|\Delta x_{k-1}\|_2 < 1 .
\end{equation}
We have proven that if~\eqref{eqn:induction1} holds then~\eqref{eqn:induction2} holds. Since~\eqref{eqn:induction0} also holds for any $x_0$ which satisfies~\eqref{eqn:ballradius}, it follows by induction that
\begin{equation}
K  + L \|\Delta x_{k}\|_2 < 1 ,~ \forall k=1,2,\ldots.
\end{equation}
Therefore, it follows from~\eqref{eqn:total_ineq} that
\begin{equation}\label{eqn:convergenceproven}
\|\Delta x_{k}\|_2 < \|\Delta x_{k-1}\|_2 ,~ \forall k=1,2,\ldots.
\end{equation}
Therefore, algorithm~\eqref{eqn:finalupdate} converges, and by Theorem~\ref{thm:optimality}, it converges to a KKT point, for any initial estimate $x_0 \in \cB \left(x_*,r \right)$, where
\begin{equation}
r=\frac{1-K}{QL} ,
\end{equation}
and any $\alpha\in\Rset_{(0,1)}$ which makes $K<1$.
\end{proof}

It can be seen from~\eqref{eqn:convergenceproven} that the proposed algorithm has a linear convergence rate.

\begin{remark}
The explicit expression~\eqref{eqn:SQPDx} is of interest for convergence analysis. For implementation, instead of~\eqref{eqn:SQPDx}, the SQP search direction can also be computed as
\begin{equation}   \label{eqn:SQPDx2ndoption}
\Delta x^{SQP}_k = -\begin{bmatrix} I_n & 0_{n \times m}
\end{bmatrix} \begin{bmatrix} B_k & J_{2|k}^T \\ J_{2|k} & 0_{m\times m}
\end{bmatrix}^{-1} \begin{bmatrix}
J_{1|k}^T \\ F_{2|k}
\end{bmatrix} . 
\end{equation}
Note that~\eqref{eqn:SQPDx2ndoption} differs from~\eqref{eqn:SQPDx} in implementation, but they both give the same solution. In this case, using the feasible search direction $\Delta x^{f2}_k$ in~\eqref{eqn:feasibleDx2} for the interpolated iteration is more convenient. It can be proven in a similar way that the optimality property and local convergence property hold for the resulting interpolated iteration~\eqref{eqn:finalDx}.
\end{remark}

The proposed method can be applied to any positive (semi) definite Hessian approximations which satisfy Assumptions~\ref{asu:Bkpositive}, \ref{asu:Bkbounded}, \ref{asu:BkLipschitz}. Popular Hessian approximations such as GGN, or any constant Hessian approximation satisfy these conditions. It is worth noting that the simple identity approximation~$B_k=I_n$ also satisfies the mentioned conditions. 


The proposed method therefore can be useful in some of the following situations. When the exact Hessian is indefinite or is too expensive to compute and the search iteration using Hessian approximations fails to converge, the proposed method can be used to enforce convergence. For large-scale cases when even Hessian approximations are computationally costly, the simple identity Hessian approximation $B_k=I_n$ can be used together with the proposed interpolation method. This results in the same search iteration as proposed in~\cite{TorrisiCDC2016,TorrisiarXiv2016}, although the iteration and convergence therein were derived in a different way. Furthermore, if the cost function is just the 2-norm $F_1(x)=x^T x$ and the identity Hessian approximation is used then the proposed algorithm recovers the algorithm in our previous work~\cite{NguyenTTCDC2016}. It should be noted, however, that the identity Hessian approximation may result in a slower convergence rate compared to other Hessian approximations, as can be seen in the example in Section~\ref{sec:example}.

\section{NUMERICAL EXAMPLE}\label{sec:example}
This section presents a numerical example to verify the performance
of the proposed algorithm.
Let us consider the test problem 77 in~\cite{HockTest1980}.
\begin{problem}\label{prb:Hock77}
 	\begin{IEEEeqnarray}{rrl}
 		& \min_{x\in \Rset^5} ~~~ & (x_1-1)^2+(x_1-x_2)^2\nonumber\\
 		&&+(x_3-1)^2+(x_4-1)^4+(x_5-1)^6 \nonumber\\
 		&\text{subject to} ~~~ & x_1^2 x_4 +\sin(x_4-x_5) -2\sqrt{2}=0 , \nonumber \\
 		&& x_2 +x_3^4 x_4^2-8-\sqrt{2} =0 . \nonumber
 	\end{IEEEeqnarray}
\end{problem}
The initial estimate is $x_0=\begin{bmatrix}
2 & 2 & 2&2&2
\end{bmatrix}^T$ and $\lambda_0=\begin{bmatrix}
0 & 0 
\end{bmatrix}^T$. This is a nonlinear equality constrained least square problem with nonzero residual. 

In nonlinear constrained least square problems, the cost function has the least square form 
\begin{equation*}
F_1(x)= \frac{1}{2} \| R(x) \|_2^2 ,
\end{equation*}
where $R : \mathbb{R}^n \rightarrow\mathbb{R}^p$. A popular Hessian approximation for this type of problems is the GGN approximation
\begin{equation*}
B^{GGN} (x) = \nabla_x R(x)^T \nabla_x R(x).
\end{equation*}
It is well known that the SQP method with GGN Hessian approximation, also called the GGN method, converges locally if the residual function $R(x)$ is small at the solution~\cite{DiehlThesis}.

In this example, we test the exact Hessian SQP method (SQP-EH), the GGN method (SQP-GGN), the proposed interpolated method with GGN Hessian approximation (iSQP-GGN), and the proposed interpolated method with identity Hessian approximation $B_k=I_n$ (iSQP-I). The optimization algorithms are programmed in Matlab and tested on a 2.4GHz computer. The measure of convergence is the 2-norm of the KKT matrix~\eqref{eqn:KKTcons}, which is called the KKT residual. The optimization algorithms terminate when the KKT residual is less than $10^{-7}$.

The test results are as follows. The SQP-EH method converges quadratically as expected. The SQP-GGN method does not converge. The iSQP-GGN method converges linearly. This demonstrates that the proposed interpolation scheme can guarantee convergence for the GGN Hessian approximation. The iSQP-I method also converges linearly, but at a slower rate. This is expected since the GGN approximation is a better approximation than the identity matrix. The convergence rate of the methods are shown in Fig~\ref{fig:convergence_rate}. The interpolation coefficients $\alpha$ shown here are among the ones that result in fastest convergence rates for each method.

\begin{figure}[t]
	\centering
	\includegraphics[width=1\columnwidth]{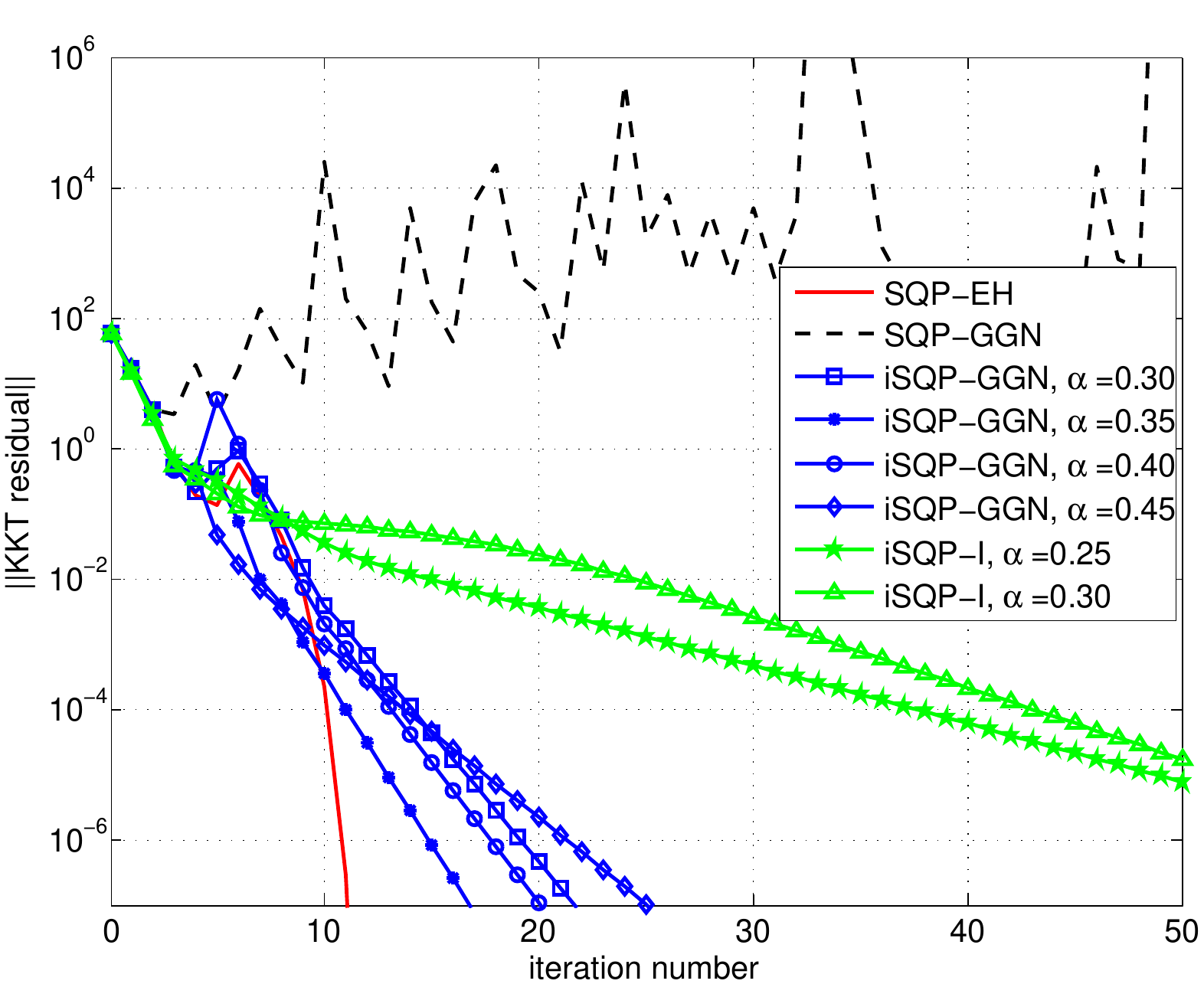}
	\caption{Convergence rate illustration.}
	\label{fig:convergence_rate}
\end{figure}


The SQP-EH method, the proposed iSQP-GGN and iSQP-I methods converge to the same solution $x = [1.166172,1.182111,1.380257,1.506036,0.610920]^T$, which is the same with the solution mentioned in~\cite{HockTest1980}. 

The number of iterations and computation times are summarized in Table~\ref{tbl:computation_times}. It is observed that the SQP-EH method requires the least number of iterations, as it converges quadratically. The iSQP-GGN method with $\alpha=0.35$ needs a larger number of iterations, but the total computation time is lower, since it requires less computation per iteration. This demonstrates that with a suitable choice of $\alpha$, the proposed method can be more efficient than the SQP-EH method, especially in large-scale cases when computation of the exact Hessian can be very expensive. 

\begin{table}[h]
	\caption{Computation times}
	\label{tbl:computation_times}
	\begin{center}
		\begin{tabular}{|l|c|c|c|}
			\hline
			\textit{Method} & \textit{Number of} & \textit{Computation} \\
			\textit{} & \textit{iterations} & \textit{time (ms)} \\
			\hline
			SQP-EH & $13$ & $113$ \\
			iSQP-GGN, $\alpha=0.30$ & $23$ & $141$ \\
			iSQP-GGN, $\alpha=0.35$ & $18$ & $107$ \\
			iSQP-GGN, $\alpha=0.40$ & $22$ & $136$ \\
			iSQP-GGN, $\alpha=0.45$ & $27$ & $160$ \\
			iSQP-I, $\alpha=0.25$ & $73$ & $359$ \\
			iSQP-I, $\alpha=0.30$ & $72$ & $347$ \\
			\hline
		\end{tabular}
	\end{center}
\end{table}

Examples of large-scale problems are nonlinear model predictive control (NMPC) problems. In~\cite{TorrisiCDC2016,TorrisiarXiv2016}, the iSQP-I method, which is called projected gradient and constraint linearization method therein, is shown to outperform some commercial solvers when applying it to the NMPC problem for an inverted pendulum. The results of the example above suggest that with a suitable choice of the Hessian approximation, e.g. GGN approximation, the proposed method may even perform better, given the special sparse structure of the NMPC problem. Demonstrating this will be a subject of our future research.



\section{CONCLUSIONS} \label{sec:conclusions}
This paper proposed a method to guarantee local convergence for SQP with poor Hessian approximation. The proposed method interpolates between the SQP search direction and a suitable feasible search direction, in order to combine the optimality property and the local convergence property of the two search directions. It was proven that the proposed algorithm converges locally at linear rate to a KKT point of the nonlinear programming problem. The effectiveness of the method was illustrated in a numerical example.

In this paper we only consider the local convergence property. For future work, we will extend the convergence result to global convergence using an augmented Lagrangian merit function~\cite{Gill1992}.






%
%

\bibliographystyle{IEEEtran} 
\bibliography{references}

\end{document}